\documentclass{icmart}
\usepackage{hyperref}
\usepackage{mathrsfs}
\usepackage{yhmath}

\contact[ardakov@maths.ox.ac.uk]{Mathematical Institute, University of Oxford, Oxford OX2 6GG}

\newtheorem*{thm}{Theorem}
\newtheorem*{defn}{Definition}

\theoremstyle{definition}

\newcommand{\Qp}{\mathbb{Q}_p}

\newcommand{\cD}[1]{\wideparen{\mathcal{D}}(#1)}

\renewcommand{\L}{\mathcal{L}}

\renewcommand{\O}{\mathcal{O}}

\newcommand{\T}{\mathcal{T}}

\newcommand{\hUK}[1]{\widehat{U(#1)_K}}
\newcommand{\hD}{\wideparen{\mathcal{D}}}

\newcommand{\invlim}{\lim\limits_{\longleftarrow}}
\newcommand{\qmb}[1]{\quad\mbox{#1}\quad}
  \let\leq=\leqslant
  \let\geq=\geqslant

\DeclareMathOperator{\Spec}{Spec}
\DeclareMathOperator{\Supp}{Supp}

\DeclareMathOperator{\GL}{GL}

\DeclareMathOperator{\Ext}{Ext}

\DeclareMathOperator{\Der}{Der}
\DeclareMathOperator{\RHom}{RHom}

\DeclareMathOperator{\an}{an}

\title[$\wideparen{\mathcal{D}}$-modules on rigid analytic spaces]{$\wideparen{\mathcal{D}}$-modules on rigid analytic spaces}

\author[Konstantin Ardakov]{Konstantin Ardakov}

\begin{document}

\begin{abstract}
We give an overview of the theory of $\wideparen{\mathcal{D}}$-modules on rigid analytic spaces and its applications to admissible locally analytic representations of $p$-adic Lie groups. 
\end{abstract}

\begin{classification}
Primary 14G22; Secondary 16S38, 22E50, 32C38.
\end{classification}

\begin{keywords}
$\mathcal{D}$-modules, rigid analytic geometry, Beilinson-Bernstein localisation, locally analytic representations, $p$-adic Lie groups
\end{keywords}

\maketitle

\thispagestyle{empty}
\section{$\wideparen{\mathcal{D}}$-modules on rigid analytic spaces}

\subsection{Rigid analytic spaces} Let $K$ be a field complete with respect to a non-archimedean norm. The ultrametric inequality $|x-y| \leq \max |x|,|y|$ implies that the "unit circle" $\{|z|=1\}$ is \emph{open} in the affine line over $K$, and the "closed unit disc" $\{|z|\leq 1\}$ is disconnected, being the disjoint union of the unit circle and the open unit disc. This is a basic feature of non-archimedean geometry: every $K$-analytic manifold is totally disconnected with respect to its natural topology.

In order to make the category of $K$-analytic manifolds more geometric, Tate invented \emph{rigid analytic spaces}  \cite{Tate}, by introducing a new (Grothendieck) topology on this category, with a basis given by \emph{$K$-affinoid varieties} which are by definition the maximal ideal spectra of \emph{$K$-affinoid algebras}. The \emph{$n^{\rm{th}}$-Tate algebra} is the algebra $K \langle x_1,\ldots,x_n \rangle$ of $K$-valued functions on the $n$-dimensional polydisc that can be globally defined by a single power series which converges on the entire polydisc, and a $K$-affinoid algebra is by definition any homomorphic image of a Tate algebra. 

The theory of rigid analytic spaces has now reached maturity comparable to that of the theory of complex analytic manifolds, thanks to the works of Kiehl \cite{Kiehl}, Raynaud \cite{Raynaud}, Berkovich \cite{BerkBook}, Huber \cite{Huber} and many others. It is now an indispensable part of modern arithmetic geometry, and has found many striking applications such as Tate's uniformisation of elliptic curves with bad reduction, and the proof of the Local Langlands conjecture for $GL_n$ by Harris and Taylor.

\subsection{Rigid analytic quantisation}\label{RigQuant}
We assume now that $K$ is discretely valued, has characteristic zero and that its residue field has characteristic $p > 0$. Let $K^\circ$ denote its ring of integers and let $\pi \in K^\circ$ be a uniformiser. In a series of papers including \cite{Berth0}, \cite{Berth}, \cite{Berth2}, Berthelot introduced the sheaf of arithmetic differential operators $\widehat{\mathcal{D}_{\mathcal{X},\mathbb{Q}}^{(m)}}$ of level $m$ on every smooth formal $K^\circ$-scheme $\mathcal{X}$ in an attempt to better understand the $p$-adic cohomology of algebraic varieties in characteristic $p$. 

Let $\mathcal{X} = \widehat{\mathbb{A}^1}$ be the formal affine line over $K^\circ$. One of the origins of this work was the observation that it is possible to obtain the ring of global sections $\Gamma(\mathcal{X}, \widehat{\mathcal{D}_{\mathcal{X},\mathbb{Q}}^{(0)}})$ by defining a non-commutative multiplication $\ast$ on the second Tate algebra $K\langle x, y \rangle$, which is completely determined by the relation 

\vspace{-0.2cm}
\[y \ast x - x \ast y = 1.\]

In other words, if $t$ denotes a local coordinate on $\mathcal{X}$ and $\partial_t$ is the corresponding vector field then $\Gamma(\mathcal{X}, \widehat{\mathcal{D}_{\mathcal{X},\mathbb{Q}}^{(0)}})$ is isomorphic to the \emph{Tate-Weyl algebra} $K \langle t; \partial_t \rangle$ which can be defined by $\pi$-adically completing the usual Weyl algebra $A_1(K^\circ) = K^\circ[t;\partial_t]$ with coefficients in $K^\circ$, and inverting $\pi$. In this way we view  $\Gamma(\mathcal{X}, \widehat{\mathcal{D}_{\mathcal{X},\mathbb{Q}}^{(0)}})$ as a naive ``rigid analytic quantisation" of the two-dimensional polydisc. 

The aim of this paper is sketch the construction of the algebra $\cD{X}$ of infinite order differential operators on a smooth $K$-affinoid variety $X$, developed in joint work with S. J. Wadsley: proofs will appear elsewhere. Morally $\cD{X}$ is a   ``rigid analytic quantisation" of the \emph{entire} cotangent bundle $T^\ast X$.

\vspace{0.4cm}
\subsection{Lie algebroids} Let $k \to R$ be a morphism of commutative rings. Recall \cite{Rinehart} that a \emph{$k$-$R$-Lie algebra} or a \emph{Lie algebroid} is a pair $(L, a)$ consisting of a $k$-Lie algebra and $R$-module $L$, together with an $R$-linear $k$-Lie algebra homomorphism $a$ from $L$ to the set of $k$-linear derivations $\Der_kR$ of $R$, such that $[v, rw] = r[v,w] + a(v)(r)w$ for all $v,w \in L$ and all $r \in R$. It is possible to form a unital associative $k$-algebra $U(L)$ called the \emph{enveloping algebra} of $(L,a)$ which is generated as a $k$-algebra by $R$ and $L$, subject to appropriate natural relations. Enveloping algebras of Lie algebroids simultaneously generalise the ordinary enveloping algebra $U(\mathfrak{g})$ of a Lie algebra $\mathfrak{g}$ over a field $k$, and also the algebra $\mathcal{D}(X)$ of (crystalline) differential operators on a smooth affine algebraic variety $X$ over $k$, since $\mathcal{T}(X) = \Der_k \mathcal{O}(X)$ is itself naturally a $k$-$\mathcal{O}(X)$-Lie algebra such that $U(\mathcal{T}(X)) = \mathcal{D}(X)$. 

The ring $U(L)$ has a natural positive filtration with associated graded ring the symmetric $R$-algebra $\mathcal{S}(L)$ whenever $L$ is a projective $R$-module; thus $U(L)$ is an algebraic quantisation of the underlying topological space $\Spec \mathcal{S}(L)$. In this way, the enveloping algebra $U(\mathfrak{g})$ can be viewed as an algebraic quantisation of $\mathfrak{g}^\ast = \Spec \mathcal{S}(\mathfrak{g})$, and $\mathcal{D}(X)$ as an algebraic quantisation of the cotangent bundle $T^\ast X = \Spec \mathcal{S}(\mathcal{T}(X))$.

\vspace{0.4cm}
\subsection{Quantised rigid analytic cotangent bundles}\label{Dhat} Let $\mathcal{O}(X)$ be the algebra of rigid $K$-analytic functions on a smooth $K$-affinoid variety $X$, let $\mathcal{O}(X)^\circ$ be its subring of power-bounded elements, and let $\mathcal{T}(X)$ be the Lie algebra of continuous $K$-linear derivations of $\mathcal{O}(X)$. 

\begin{defn}
We say that an $\mathcal{O}(X)^\circ$-submodule $\L$ of $\mathcal{T}(X)$ is a \emph{Lie lattice} if it is a sub $K^\circ$-$\mathcal{O}(X)^\circ$-Lie algebra of $\mathcal{T}(X)$, is finitely generated as a module over $\mathcal{O}(X)^\circ$ and generates $\mathcal{T}(X)$ as a $K$-vector space. Let $\widehat{U(\L)}$ be the $\pi$-adic completion of $U(\L)$ and let $\hUK{\L} := \widehat{U(\L)} \otimes_{K^\circ} K$. We define $\cD{X}$ to be the inverse limit of the $\hUK{\L}$ where $\L$ runs over all possible Lie lattices in $\mathcal{T}(X)$.  
\end{defn}

\noindent Every Lie lattice $\L$ gives rise to a tower 
\[\hUK{\L} \leftarrow \hUK{\pi \L} \leftarrow \hUK{\pi^2 \L} \leftarrow \cdots\]
of Noetherian Banach $K$-algebras, whose inverse limit is a Fr\'echet $K$-algebra in the sense of non-archimedean functional analysis \cite{ST}. Since any two Lie lattices in $\T(X)$ contain a $\pi$-power multiple of each other, the inverse limit of this tower is isomorphic to $\cD{X}$, regardless of the choice of the Lie lattice $\L$. 

\vspace{0.4cm}
\noindent \textbf{Example.} Let $X$ denote the closed disc of radius $1$ in the affine line over $K$, with local coordinate $t$. Then $\mathcal{O}(X)$ is the first Tate algebra $K\langle t \rangle$ and $\mathcal{O}(X)^\circ$ is the subalgebra $K^\circ\langle t \rangle := K\langle t \rangle \cap K^\circ[[t]]$. Let $\L = K^\circ \langle t \rangle \partial_t$, so that $\pi^n \L$ is a Lie lattice in $\T(X)$ for each $n \geq 0$ and 
\[ \hUK{\pi^n \L} \cong K \langle t; \pi^n \partial_t\rangle\]
is a deformation of the Tate-Weyl algebra over $K$. Thus 
\[\cD{X} = \bigcap_{n \geq 0} K \langle t; \pi^n \partial_t\rangle =  
\left\{ \sum_{i=0}^\infty a_i \partial_t^i \in K\langle t\rangle [[\partial_t]] : \lim\limits_{i\to\infty} \frac{a_i}{\pi^{in}} = 0 \qmb{for all} n \geq 0\right\}\]
is naturally in bijection with $\O(T^\ast X)$.
\vspace{0.4cm}

If $Y \hookrightarrow X$ is an open embedding of smooth $K$-affinoid varieties and $\L$ is a Lie lattice in $\T(X)$, then $\mathcal{O}(Y)^\circ \otimes_{\mathcal{O}(X)^\circ} \L$ need not be a Lie lattice in $\T(Y)$ in general. However,  a sufficiently large $\pi$-power multiple of $\mathcal{O}(Y)^\circ \otimes_{\mathcal{O}(X)^\circ} \L$ \emph{is} a Lie lattice in $\T(Y)$, and the functoriality of enveloping algebras of Lie algebroids induces a ring map $\cD{X} \to \cD{Y}$. We have the following non-commutative analogue of Tate's Acyclicity Theorem:

\begin{thm} Let $X$ be a smooth $K$-affinoid variety. Then $\hD$ is a sheaf on $X$ with vanishing higher cohomology.
\end{thm}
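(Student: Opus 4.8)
\emph{Sketch of proof.} The plan is to adapt the proof of Tate's Acyclicity Theorem to the non-commutative Fr\'echet setting: reduce the statement to a single two-element covering, and then check exactness of the resulting \v{C}ech complex one Banach level at a time. \textbf{First}, by the standard comparison of \v{C}ech and derived-functor cohomology on the weak Grothendieck topology, and because affinoid subdomains form a basis of the topology of a smooth $K$-affinoid variety, it suffices to prove that for every such $X$ and every finite covering of $X$ by affinoid subdomains the augmented \v{C}ech complex of $\hD$ is exact. Exactly as in the commutative theory, every finite affinoid covering is refined by a Laurent covering, and an induction on the number of defining functions reduces the problem to the two-element Laurent covering $X = U_0 \cup U_1$ attached to a single $f \in \O(X)$, with $U_{01} = U_0 \cap U_1$; so the heart of the matter is exactness of
\[0 \longrightarrow \cD{X} \longrightarrow \cD{U_0} \oplus \cD{U_1} \longrightarrow \cD{U_{01}} \longrightarrow 0.\]

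\textbf{Second}, fix a Lie lattice $\L$ in $\T(X)$. Replacing $\L$ by $\pi^k\L$ for $k$ large enough, one can arrange that $\O(U_0)^\circ \otimes_{\O(X)^\circ}\L$, $\O(U_1)^\circ \otimes_{\O(X)^\circ}\L$ and $\O(U_{01})^\circ \otimes_{\O(X)^\circ}\L$ are simultaneously Lie lattices, a property that persists on multiplying by $\pi$. Functoriality of enveloping algebras of Lie algebroids then identifies the \v{C}ech complex above with the inverse limit over $n \geq 0$ of the \v{C}ech complexes of the Noetherian Banach algebras $\hUK{\pi^n\L}$ relative to this covering, and it is enough to prove exactness of each of these Banach-level complexes, uniformly enough to survive the inverse limit.

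\textbf{Third}, filter $\widehat{U(\pi^n\L)}$ by order of differential operators; its associated graded ring is, up to $\pi$-adic completion, the symmetric $\O(X)^\circ$-algebra $\mathcal{S}_{\O(X)^\circ}(\pi^n\L)$, hence flat over $\O(X)$ after inverting $\pi$, and the restriction maps induce on associated graded rings the base change along the flat maps $\O(X) \to \O(U_i)$ of affinoid subdomains. Therefore the associated graded of the Banach-level \v{C}ech complex is obtained from the \v{C}ech complex of the structure sheaf $\O$ by flat base change, and so is exact by Tate's Acyclicity Theorem for $\O(X)$; a standard complete-filtered exactness criterion lifts this to exactness of the \v{C}ech complex of $\hUK{\pi^n\L}$ itself. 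Since the transition maps of the tower $(\hUK{\pi^n\L})_n$ are flat with dense image, these complexes form an inverse system of exact sequences of Banach spaces satisfying the topological Mittag-Leffler condition, so the inverse limit stays exact and yields the desired sequence for $\cD{X}$. The vanishing of higher \v{C}ech cohomology for arbitrary finite coverings follows from this two-term case by the same combinatorial bookkeeping as in the commutative theory, and together these give the sheaf property and the acyclicity.

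\textbf{The main obstacle} is making the graded-ring reduction genuinely work: one must choose the Lie lattices on $U_0$, $U_1$ and $U_{01}$ so that the order filtrations are respected by restriction, control the interaction of $\pi$-adic completion with these filtrations so that passing to associated graded rings commutes with the relevant localisations and completions, and establish the Noetherianity of the rings in play together with the flatness of the Banach-level restriction maps. It is precisely these flatness statements, proved via associated graded rings and Tate acyclicity, that carry the non-commutative content of the theorem.
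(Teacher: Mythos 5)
The paper itself contains no proof of this statement --- it is a survey, and explicitly defers all proofs to the joint work with Wadsley --- so the comparison must be with the argument given there. Your outline matches that argument in its essential structure: reduction via the standard Tate/Gerritzen--Grauert machinery to two-element Laurent (or rational) coverings; rescaling a fixed Lie lattice $\L$ by a power of $\pi$ so that it restricts to Lie lattices on all pieces of the covering; proving acyclicity one Banach level $\hUK{\pi^n\L}$ at a time by reducing to Tate's Acyclicity Theorem for $\O$ together with flatness of $\O(X) \to \O(U_i)$; and passing to the inverse limit using that the tower has flat transition maps with dense image (topological Mittag--Leffler). These are the right moves, and the observation that the non-commutative content is concentrated in the flatness of the Banach-level restriction maps, proved via associated graded rings, is accurate.

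The one step that would fail as literally written is in your third paragraph: $\widehat{U(\pi^n\L)}$ carries no exhaustive filtration by order of differential operators, because the $\pi$-adic completion mixes all orders, so you cannot ``filter by order and pass to the associated graded'' of the completed algebra. The filtration actually available is the $\pi$-adic one, whose associated graded is a (Laurent polynomial extension of the) symmetric algebra of $\L/\pi\L$ over $\O(X)^\circ/\pi\O(X)^\circ$; that is what yields Noetherianity and the flatness statements. For the acyclicity itself the cleaner device --- and the one the actual proof uses --- is the PBW-type isomorphism of left Banach $\O(X)$-modules between $\hUK{\L}$ and a completed tensor product of $\O(X)$ with a fixed Banach space (a completed symmetric algebra), valid after shrinking so that $\L$ is free. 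This identifies the Banach-level \v{C}ech complex with the completed tensor product of Tate's \v{C}ech complex for $\O$ against that fixed Banach space, and exactness follows because Tate's complex is strict (by the open mapping theorem) and completed tensoring preserves strict exact sequences of Banach spaces. With that repair, and with the (routine but necessary) remark that the Laurent-covering induction requires the sheaf statement on the subdomains $U_i$ as an inductive hypothesis, your sketch is a faithful account of the proof.
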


This construction extends naturally to a sheaf of $K$-algebras $\hD$ on arbitrary smooth rigid analytic varieties over $K$.
\subsection{Coadmissible $\hD$-modules} Recall \cite{ST3} that Schneider and Teitelbaum defined a \emph{Fr\'echet-Stein} algebra to be the inverse limit of a countable inverse system of Noetherian $K$-Banach algebras $(A_n)_{n\in\mathbb{N}}$ with flat transition maps. 

\begin{thm} Let $X$ be a smooth $K$-affinoid variety. Then the algebra $\cD{X}$ is Fr\'echet-Stein.
\end{thm}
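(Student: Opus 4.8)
The plan is to verify directly the two conditions in the definition of a Fr\'echet--Stein algebra. Fix a Lie lattice $\L$ in $\T(X)$. Since any two Lie lattices contain a $\pi$-power multiple of one another, the lattices $\pi^n\L$ ($n\geq 0$) are cofinal among all Lie lattices, so that $\cD{X}\cong\invlim\hUK{\pi^n\L}$ is a \emph{countable} inverse limit; and, as already observed, each $\hUK{\pi^n\L}$ is a Noetherian $K$-Banach algebra. (For the latter one can equip $\widehat{U(\pi^n\L)}$ with its $\pi$-adic filtration: its associated graded ring is $U_{\O(X)^\circ/\pi}\big(\pi^n\L/\pi^{n+1}\L\big)[s]$ with $s$ the symbol of $\pi$, the base ring $\O(X)^\circ/\pi$ is the coordinate ring of an affine chart of the special fibre of a formal model of $X$, hence Noetherian, and $\pi^n\L/\pi^{n+1}\L$ is finitely generated over it, so this graded ring, and therefore $\widehat{U(\pi^n\L)}$ and its localisation $\hUK{\pi^n\L}$, is Noetherian.) It thus remains to prove that each transition map $\hUK{\pi^{n+1}\L}\to\hUK{\pi^n\L}$ is flat.

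This flatness is the substance of the theorem. The geometric picture is that $\hUK{\pi^n\L}$ should be regarded as the ring of functions on the sub-polydisc-bundle of $T^\ast X$ of cotangent radius $|\pi|^{-n}$, that the transition map is restriction along the inclusion of the radius-$|\pi|^{-n}$ bundle into the radius-$|\pi|^{-n-1}$ bundle --- an affinoid subdomain inclusion in the fibre directions, hence flat --- and that these bundles exhaust $T^\ast X$ as $n\to\infty$; this is also the precise sense in which $\cD{X}$ is a quantisation of the whole cotangent bundle. To turn this into a proof I would argue integrally and invert $\pi$ only at the end. After inverting $\pi$, all the $\hUK{\pi^n\L}$ share the dense subalgebra $U(\L)_K$, and $\hUK{\pi^n\L}$ is its completion with respect to a submultiplicative norm \emph{coarser} than the one defining $\hUK{\pi^{n+1}\L}$; the transition map is thus a completion map, and what must be proved is the non-commutative counterpart of the elementary fact that restriction of an affinoid algebra to a nested affinoid subdomain is flat.

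The obstruction to a short argument --- and the technical core of the proof --- is that the $\pi$-adic filtrations on $\widehat{U(\pi^{n+1}\L)}$ and $\widehat{U(\pi^n\L)}$ are \emph{not} compatible with the transition map: a module generator of the source lands in $\pi\,\widehat{U(\pi^n\L)}$, so the induced map on $\pi$-adic associated graded rings collapses and carries no flatness information. The remedy is to endow the source with a suitably \emph{rescaled} filtration interpolating between the two, chosen so that the induced map of associated graded rings becomes the restriction map between two commutative Noetherian affinoid algebras that realises the polydisc-bundle inclusion above, and is therefore plainly flat; one then lifts the flatness back through the filtration by the standard argument for complete filtered rings with Noetherian associated graded, and finally inverts $\pi$. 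The bulk of the work lies in constructing this rescaled filtration and in managing the integral structures so that the graded rings in sight remain Noetherian --- for which one takes the Lie lattice $\L$ to be compatible with a formal model of $X$, harmlessly, since $\cD{X}$ is independent of that choice. Combining this with the first paragraph then shows that $\cD{X}$ is Fr\'echet--Stein.
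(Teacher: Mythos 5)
A preliminary remark: this survey states the theorem without proof (the introduction explicitly defers all proofs to the joint work with Wadsley), so your proposal can only be measured against the published argument rather than against anything in the text. Your first paragraph is correct and is indeed how the proof begins: the chain $(\pi^n\L)_{n\geq 0}$ is cofinal among Lie lattices, so $\cD{X}\cong\invlim\hUK{\pi^n\L}$ is a countable inverse limit; each $\widehat{U(\pi^n\L)}$ is Noetherian because its $\pi$-adic associated graded ring is the enveloping algebra of the reduced Lie algebroid over the Noetherian ring $\O(X)^\circ/\pi\O(X)^\circ$, extended by the central symbol of $\pi$; and the whole theorem reduces to flatness of the transition maps. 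You have also correctly located the obstruction: the transition map is contractive but not isometric for the unit-ball norms, so the induced map of $\pi$-adic associated graded rings annihilates the image of $\pi^{n+1}\L$ and carries no flatness information.

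The gap is that the step carrying the entire weight of the theorem is asserted rather than proved, and the mechanism you propose for it does not cohere as stated. The associated graded rings in play are not affinoid algebras but graded algebras over the residue field, and the "reduction" of the affinoid restriction map realising the polydisc-bundle inclusion is precisely the degenerate, non-flat map above --- so "plainly flat" is exactly what fails at the graded level. If instead your rescaling is meant to produce a filtration on the source that is still complete and separated while making the graded map an isomorphism onto a Laurent-polynomial extension (which is what the explicit computation gives when one keeps track of the shift by the symbol of $\pi$), then the standard comparison lemma for complete filtrations would force the transition map itself to be bijective, which it is not: it is injective with dense, proper image. So no single filtered-ring flatness lift of the kind you describe can work; the source cannot simultaneously be complete for the rescaled filtration and map strictly to the target. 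The actual argument (whose prototype is Berthelot's proof that $\widehat{\mathcal{D}}^{(m+1)}_{\mathbb{Q}}$ is flat over $\widehat{\mathcal{D}}^{(m)}_{\mathbb{Q}}$) has to proceed module by module: for a finitely generated $\hUK{\pi^{n+1}\L}$-module $M$ one chooses a lattice $M^\circ$, identifies $\hUK{\pi^n\L}\otimes M$ with a suitable $\pi$-adically completed tensor product, and proves exactness of that completion functor by Artin--Rees-type control of $\pi$-torsion; the rescaled filtrations enter there, on modules, not as a global refiltration of the ring. Constructing that argument is the substance of the theorem, and it is absent from your proposal.
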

There is a well-behaved abelian category of \emph{coadmissible $A$-modules} associated with any Fr\'echet-Stein algebra $A$, whose objects are inverse limits of compatible familes $(M_n)_{n\in\mathbb{N}}$ where each $M_n$ is a finitely generated module over $A_n$. Let $X$ be a smooth rigid $K$-analytic variety, and let $(X_j)_j$ be a sufficiently fine admissible $K$-affinoid covering of $X$. It is possible to prove a precise non-commutative analogue of Kiehl's Theorem from \cite{Kiehl}, which allows us to glue the resulting categories of coadmissible $\cD{X_j}$-modules in an appropriate way in order to obtain the category $\mathcal{C}_X$ of \emph{coadmissible $\hD$-modules} on $X$.

Every $\hD$-module that is coherent as an $\mathcal{O}_X$-module is coadmissible in this sense. As in the classical theory \cite{HTT} over $\mathbb{C}$, we may think of these $\hD$-modules as rigid vector bundles equipped with a flat connection, and thereby obtain a link between our $\hD$-modules and the well-established theory of $p$-adic differential equations \cite{Kedlaya}. There is also a natural exact analytification functor from the category of coherent $\mathcal{D}$-modules on a smooth algebraic variety $Y$ over $K$ to $\mathcal{C}_{Y^{an}}$. For these reasons, we will regard $\mathcal{C}_X$ as an appropriate rigid analytic analogue of the category of coherent algebraic $\mathcal{D}$-modules. 

\subsection{Functoriality}
In the classical setting \cite{HTT}, it is known that the inverse and direct image functors for $\mathcal{D}$-modules preserve $\mathcal{O}$-quasi-coherence, but need not in general preserve $\mathcal{D}$-coherence. Since our category of coadmissible $\hD$-modules is modelled on the category of \emph{coherent} algebraic $\mathcal{D}$-modules, and since it is well-known that there is no obvious well-behaved analogue of quasi-coherent $\mathcal{O}$-modules in rigid analytic geometry, it is unreasonable to expect to be able to define direct and inverse image functors in full generality in our current  setting.  However, given a morphism $f : X \to Y$ between smooth rigid analytic varieties, it is possible to define a \emph{transfer bimodule} $\hD_{X \to Y} := \mathcal{O}_X \wideparen{\otimes}_{f^{-1} \mathcal{O}_Y} f^{-1} \hD_Y$, and a direct image functor
\[\begin{array}{ccccc} f_+ &:& \mathcal{C}^r_X &\to& \mathcal{C}^r_Y  \\ & & \mathcal{M} & \mapsto & f_\ast\left(\mathcal{M} \wideparen{\otimes}_{\hD_X} \hD_{X \to Y} \right)\end{array}\]
between the derived categories of coadmissible right $\hD$-modules, at least in the case when $f$ is \emph{proper}. It would be interesting to investigate whether the classical inverse and direct image functors extend to our setting in a greater generality.

\subsection{Dimension theory}\label{DimTh}
Whenever $A$ is an \emph{Auslander-regular ring} \cite{Clark}, the functor $M \mapsto \RHom_A(M, A)$ induces  an anti-equivalence between the derived categories of finitely generated left, and right, $A$-modules \cite{YZ}. This us allows to associate with any finitely generated $A$-module $M$ its \emph{canonical dimension} $d(M)$, defined in terms of the vanishing of the $\Ext$ groups $\Ext_A^j(M,A)$. 

When $A$ is the ring of regular functions on a smooth affine variety $X$ over a field, $d(M)$ is the Krull dimension of the support of the associated sheaf $\widetilde{M}$ on $X$.  

\begin{thm} Let $X$ be the $d$-dimensional polydisc and let $\L$ be the free $\O(X)^\circ$-submodule of $\T(X)$ spanned by the standard vector fields. For every $n \geq 0$, the deformed Tate-Weyl algebra $\hUK{\pi^n \L}$ is an Auslander-regular ring of global dimension $d$.
\end{thm}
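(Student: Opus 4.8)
The plan is to reduce everything to a filtered-ring argument. The ring $A_n := \hUK{\pi^n\L}$ carries a filtration, and the key is to identify its associated graded ring. Since $\L$ is free over $R^\circ := \O(X)^\circ$ with basis $\partial_1,\dots,\partial_d$, the enveloping algebra $U(\pi^n\L)$ has the PBW filtration with $\gr U(\pi^n\L) \cong \cS_{R^\circ}(\pi^n\L)$, a polynomial ring over $R^\circ$ in $d$ variables. Passing to the $\pi$-adic completion and inverting $\pi$ does not behave well with the PBW filtration directly, so instead I would equip $A_n$ with the \emph{degree filtration} coming from declaring the symbols $\pi^n\partial_i$ to have degree $1$ and $R := \O(X)$ to have degree $0$, then check that this makes $A_n$ into a complete filtered ring whose associated graded ring is $\O(X)\langle \xi_1,\dots,\xi_d\rangle$ — the $(d+\dim X)$-dimensional Tate algebra (here the $\xi_i$ are the principal symbols of $\pi^n\partial_i$). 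Concretely, one shows $A_n$ is the ring of ``Tate--Weyl'' power series $\sum_{\alpha} a_\alpha (\pi^n\partial)^\alpha$ with $a_\alpha\in\O(X)$ and $\|a_\alpha\|\to 0$, generalizing the one-variable computation in the Example.

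**The main tool** is then the standard principle that good homological properties lift from the associated graded ring to a complete, Zariskian filtered ring. Specifically: (i) the Tate algebra $\O(X)\langle\xi_1,\dots,\xi_d\rangle$ is Noetherian, regular of global dimension $d + \dim X$ (it is a regular affinoid algebra), and Auslander-regular (Tate algebras are Auslander-regular — this is classical, e.g. it follows from the fact that they are quotients of polynomial rings' completions, or directly). (ii) By a theorem of Björk (or its Zariskian-filtered refinement due to Li--van Oystaeyen), if $A$ is a filtered ring whose filtration is complete and Zariskian and whose associated graded ring $\gr A$ is Auslander-regular Noetherian, then $A$ is Auslander-regular Noetherian with $\gl.\dim A \le \gl.\dim \gr A$. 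Noetherianity of $A_n$ is already known from the earlier Fréchet--Stein theorem (its constituents are Noetherian Banach algebras), but one can also recover it here. So the bulk of the proof is verifying the hypotheses of this lifting theorem.

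**The global dimension bound** in the other direction — showing $\gl.\dim A_n \ge d$, so that equality holds — requires exhibiting a module of projective dimension exactly $d$. The natural candidate is the ``trivial'' module $A_n / A_n(\pi^n\partial_1,\dots,\pi^n\partial_d)$, i.e. $\O(X)$ viewed as an $A_n$-module via the augmentation killing the derivations; its projective resolution is the Koszul-type (Spencer) complex $A_n\otimes_{\O(X)} \bigwedge^\bullet \L$, and one checks this has length exactly $d$ and is non-split at the top by passing again to $\gr$, where it becomes the Koszul complex of the regular sequence $\xi_1,\dots,\xi_d$ over the Tate algebra. Alternatively, since $\O(X)$ itself has global dimension $\dim X \le d$ wait — that goes the wrong way; the clean route is just the Koszul complex computation. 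So the full claim $\gl.\dim A_n = d$ is \emph{independent of $\dim X$}, which is the slightly surprising feature worth emphasizing, and which is forced because after inverting $\pi$ the ring $A_n$ ``sees'' only the fibre directions of the cotangent bundle homologically (the base directions having become invertible in an appropriate sense — more precisely, $\gr A_n$ is regular of dimension $d+\dim X$ but $A_n$ is a ``microlocalization''-type ring whose global dimension drops to $d$; the drop is detected by the fact that $\O(X)$ is not a field but its maximal ideals do not survive to $A_n$).

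**The main obstacle** I anticipate is \emph{not} the homological lifting, which is by-the-book, but rather the careful identification of $\gr A_n$ with a genuine Tate algebra: one must choose the right filtration on $A_n$ (the $\pi$-adic one is wrong, and the naive PBW one does not extend to the completion), prove the filtration is exhaustive, separated, and complete, and then compute $\gr$ without the computation degenerating — in particular one must rule out unexpected relations or ``convergence collapse'' among the symbols. This is where the hypothesis that $\L$ is \emph{free} with the \emph{standard} vector fields on a \emph{polydisc} is used essentially: on a general smooth affinoid $X$ one would have $\gr A_n$ a twisted/relative version that is harder to control, which is presumably why the theorem is stated only for the polydisc.
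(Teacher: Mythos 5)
The paper proves this theorem by citing \cite[Theorem B]{AW13}, and the reason it calls the statement a ``version of Bernstein's Inequality'' is precisely the step your argument does not supply. Note first that in this theorem $\dim X = d$ and $\L$ has rank $d$, so the associated graded ring you construct has global dimension $d + \dim X = 2d$. The lifting theorem of Bj\"ork/Li--van Oystaeyen therefore yields Auslander regularity together with the upper bound $2d$ on the global dimension of $A_n := \hUK{\pi^n\L}$, while your Koszul (Spencer) resolution of $\O(X) \cong A_n/A_n\cdot(\pi^n\partial_1,\ldots,\pi^n\partial_d)$ yields the lower bound $d$. These do not meet. The missing inequality (global dimension $\leq d$) is, via Auslander--Gorenstein duality and the equality $j(M)+d(M)=2d$, equivalent to the assertion that every non-zero finitely generated $A_n$-module $M$ satisfies $d(M)\geq d$ --- that is, to Bernstein's inequality itself. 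For the classical Weyl algebra this drop from $2d$ to $d$ is Roos's theorem, deduced from Bernstein's inequality whose standard proof uses the Bernstein (total-degree) filtration; no such filtration exists on $K\langle x;\pi^n\partial\rangle$, since its elements are genuine power series in $x$. Your heuristic that ``the maximal ideals of $\O(X)$ do not survive'' is not a proof and is misleading: the module $A_n/A_n\cdot(x_1,\ldots,x_d)$ is a perfectly good non-zero coherent module (the analogue of $K[\partial]$); the point is rather that it still has canonical dimension $d$. In \cite{AW13} this inequality is established by a genuinely separate argument (reduction modulo $\pi$ and an analysis of the resulting modules in characteristic $p$), not as a formal consequence of the filtered lifting.

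A secondary but real defect is the choice of filtration. The order filtration by powers of the $\pi^n\partial_i$ is not exhaustive on $A_n$: the completed algebra contains convergent infinite series $\sum_\alpha a_\alpha(\pi^n\partial)^\alpha$, so the union of the filtration steps is a proper subring and the Zariskian machinery cannot be applied to it. The filtration that does work is the one induced by the norm, i.e.\ the $\pi$-adic filtration on the lattice $\widehat{U(\pi^n\L)}$; its associated graded is (a Laurent polynomial extension of) $U(\pi^n\L/\pi\cdot\pi^n\L)$, an enveloping algebra of a Lie algebroid over the residue field --- a polynomial ring in $2d$ variables when $n\geq 1$ --- rather than a Tate algebra. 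This route does deliver Noetherianity and Auslander regularity, but again only the weak upper bound on the global dimension, so the essential content of the theorem still rests on the Bernstein inequality of \cite[Theorem B]{AW13}.
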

\begin{proof} This version of Bernstein's Inequality for deformed Tate-Weyl algebras follows from \cite[Theorem B]{AW13}.\end{proof}
Schneider and Teitelbaum observed in \cite[\S 8]{ST} that if $A = \invlim A_n$ is a Fr\'echet-Stein algebra such that each $A_n$ is Auslander-regular of the same global dimension, then the canonical dimension function extends naturally to the category of coadmissible $A$-modules. It follows from the above result that their formalism applies to our algebras $\cD{X}$ whenever $X$ is sufficiently small, and allows us to define the canonical dimension of a coadmissible $\hD$-module on an arbitrary smooth rigid $K$-analytic variety.  

\begin{defn}
We say that a non-zero coadmissible $\hD$-module is \emph{holonomic} if its canonical dimension is zero.
\end{defn}

\subsection{Support and Kashiwara's equivalence}\label{Kash}

The support of an abelian sheaf on a topological space is a fundamental invariant. Since our sheaves are defined on a space with a Grothendieck topology, the usual definition of support in terms of stalks seems inferior to the alternative one given by
\[ \Supp \mathcal{M} := X - \bigcup \left\{ U \hspace{0.3cm} \mbox{admissible open in } X: \mathcal{M}_{|U} = 0\right\}.\]
It is natural to hope that $\Supp \mathcal{M}$ is an analytic subspace of $X$ for every coadmissible $\hD$-module $\mathcal{M}$. However, morally a coadmissible $\hD$-module is a coherent sheaf on a rigid analytic quantisation of $T^\ast X$ and the projection map $T^\ast X \to X$ isn't proper, so this hope is probably unreasonable. Nevertheless, it seems possible that there is an appropriately large subcategory of coadmissible $\hD$-modules whose objects do have analytic support.

As there is no natural exhaustive ring filtration on the sheaf $\hD$ due to the presence of completions, it is not clear at present how to define a good analogue of the \emph{characteristic variety} for coadmissible $\hD$-modules.  Nevertheless it is conceivable that in the future it will be possible to do this by ``microlocalising" coadmissible $\hD$-modules to appropriate Lagrangian affinoid subspaces of $T^\ast X$, and thereby make more precise the words ``rigid analytic quantisation". In any case, the notion of support defined above is sufficient for us to be able to formulate a rigid-analytic version of the fundamental \emph{Kashiwara equivalence}:

\begin{thm} Let $i : Y \to X$ be a closed immersion of smooth rigid analytic varieties. Then the functor $i_+$ induces an equivalence of abelian categories between $\mathcal{C}_Y$ and the full subcategory $\mathcal{C}_X^Y$ consisting of objects $\mathcal{M}$ in $\mathcal{C}_X$ with support contained in the image of $Y$.
\end{thm}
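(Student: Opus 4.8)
The strategy is to transplant the classical argument \cite{HTT} into the Fr\'echet--Stein world, and the first task is to cut the problem down to a local model. Since coadmissible $\hD$-modules satisfy the non-commutative analogue of Kiehl's theorem \cite{Kiehl}, and since $i_+$ commutes with restriction of $X$ to an admissible affinoid open $U$ (with $i$ replaced by $i^{-1}(U)\hookrightarrow U$) --- a consequence of the construction of $i_+$ via the transfer bimodule together with the flatness built into the Fr\'echet--Stein structure --- it is enough to treat the case when $X$ is $K$-affinoid. A closed immersion of smooth rigid analytic varieties is automatically a smooth morphism onto its image, so after shrinking $X$ we may assume $X\cong Y\times\mathbb{D}^c$, with $i$ the inclusion of $Y\times\{0\}$ and $\mathbb{D}$ the closed unit disc with coordinate $t$; an induction on the codimension $c$ then reduces us to $c=1$, so from now on $X=Y\times\mathbb{D}$.

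In this product situation one has a completed tensor factorisation $\hD_X\cong\hD_Y\wideparen{\otimes}_K\hD_{\mathbb{D}}$, the functor $i_+$ is exact, and $i_+\N$ is identified with $\N\wideparen{\otimes}_K\delta$, where $\delta:=\hD_{\mathbb{D}}/\hD_{\mathbb{D}}\,t$. The module $\delta$ is coadmissible over $\hD_{\mathbb{D}}$ --- at level $n$ it is the cyclic module $\hUK{\pi^n\L}/\hUK{\pi^n\L}\,t$ --- it is spanned by the classes $\partial_t^k\overline{1}$ with $t\cdot\partial_t^k\overline{1}=-k\,\partial_t^{k-1}\overline{1}$, and $\ker\bigl(t\colon\delta\to\delta\bigr)=K\,\overline{1}$. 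Hence $i_+$ maps $\C_Y$ into $\C_X^Y$, and the candidate quasi-inverse is
\[
 i^\sharp\M:=\ker\bigl(t\colon\M\to\M\bigr),
\]
regarded as a $\hD_Y$-module through the factorisation. The unit isomorphism $\N\xrightarrow{\sim}i^\sharp i_+\N$ is immediate from the explicit description of $\delta$, so $i_+$ is fully faithful; the content of the theorem is that $i^\sharp$ takes values in $\C_Y$ and that the counit $i_+i^\sharp\M\to\M$ is an isomorphism for every $\M$ in $\C_X^Y$.

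To prove the latter I would first translate the hypothesis $\Supp\M\subseteq Y$ into the algebraic statement that $\M$ is $t$-power-torsion: every local section is annihilated by some power of $t$. Writing $\M=\invlim\M_n$ with $\M_n$ finitely generated over $A_n=\hUK{\pi^n\L}$, the point is that $\Supp\M\subseteq Y$ forces the support of each $\M_n$, viewed as a module over the affinoid algebra $\O(X)\subseteq A_n$, to lie in the zero locus of $t$; the Nullstellensatz for affinoid algebras then supplies, for each local section $m$, a power $t^N$ with $t^Nm=0$. Granting this, the Euler operator $\theta:=\partial_t t\in\hD_X$ satisfies $\theta t=t(\theta+1)$ and $\theta\partial_t=\partial_t(\theta-1)$, and a short induction shows that $\prod_{j=0}^{k}(\theta+j)$ annihilates $\ker(t^{k+1})$. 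Thus every local section of $\M$ is killed by a polynomial in $\theta$ with distinct integer roots in $\mathbb{Z}_{\leq0}$, so $\theta$ acts locally finitely; the resulting eigenspaces $\M^{(-k)}:=\ker(\theta+k)$ are $\hD_Y$-submodules, one has $\M^{(0)}=\ker t=i^\sharp\M$ (since $\ker t\subseteq\ker\theta$, and conversely $t$ maps $\M^{(0)}$ into the eigenspace $\M^{(1)}=0$), and $\partial_t\colon\M^{(-k)}\xrightarrow{\sim}\M^{(-k-1)}$ for all $k\geq0$. Assembling these identifications yields an isomorphism $\M\cong\M^{(0)}\wideparen{\otimes}_K\delta=i_+i^\sharp\M$ of $\hD_X$-modules. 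It remains to check that this is an isomorphism of coadmissible modules: reading the $\theta$-eigenspace decomposition off at each level $n$ --- where $\M_n$, being $t$-power-torsion, is the genuine internal direct sum of the $(\M_n)^{(-k)}$ --- one verifies that the spectral projectors are continuous, that $(i^\sharp\M)_n=(\M_n)^{(0)}$ is finitely generated over the level-$n$ term of $\hD_Y$, and that the transition maps agree, so that $i^\sharp\M$ is indeed coadmissible over $\hD_Y$.

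I expect the principal obstacle to be exactly this interface between the two formalisms. The $\theta$-eigenspace decomposition, and the passage from $\Supp\M\subseteq Y$ to $t$-power-torsion, are essentially soft for abstract $\hD_X$-modules; the real work is verifying that the resulting direct-sum decomposition is compatible with the Banach structures at every finite level $A_n$ and with the flat transition maps of the Fr\'echet--Stein towers, so that $i^\sharp$ genuinely lands in coadmissible $\hD_Y$-modules and one obtains an equivalence of the intended abelian categories rather than merely of underlying $\hD$-modules. A further difficulty, without classical precedent, is the faithful translation of the Grothendieck-topological notion of support into a statement about the finitely generated $A_n$-modules $\M_n$, all the more delicate because, as already noted, there is no exhaustive filtration on $\hD$ available to organise such an argument.
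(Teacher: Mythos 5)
The paper itself contains no proof of this theorem (it is a survey and explicitly defers all proofs elsewhere), so your sketch can only be judged on its own merits. Its skeleton --- reduce to codimension one, use the module $\delta=\hD_{\mathbb{D}}/\hD_{\mathbb{D}}t$, take $i^\sharp=\ker t$ as quasi-inverse, and decompose via the Euler operator $\theta=\partial_t t$ --- is the right adaptation of the classical argument, and you correctly locate the difficulty at the interface with the Fr\'echet--Stein formalism. But two of your concrete steps fail. First, the local model: a closed immersion of smooth rigid spaces does not locally yield $X\cong Y\times\mathbb{D}^c$, hence there is no factorisation $\hD_X\cong\hD_Y\wideparen{\otimes}_K\hD_{\mathbb{D}}$. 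What smoothness gives locally is an \'etale map to a polydisc under which $Y$ is the preimage of a coordinate hyperplane, i.e.\ a function $t$ cutting out $Y$ and a vector field $\partial$ with $\partial(t)=1$; the argument must be run with only these data, which changes the bookkeeping ($i_+\mathcal{N}$ becomes a completed direct sum of the $\partial^k\mathcal{N}$ rather than an external tensor product).

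Second, and more seriously, your treatment of the level-$n$ modules is incorrect. Over the Banach algebra $A_n=K\langle t;\pi^n\partial_t\rangle$ the quotient $\delta_n=A_n/A_nt$ consists of all convergent sums $\sum a_k(\pi^n\partial_t)^k\overline{1}$, and such an element with infinitely many nonzero $a_k$ is annihilated by no power of $t$; so $\mathcal{M}_n$ need not be $t$-power-torsion even when $\mathcal{M}$ is, and the $\theta$-eigenspaces do not give a genuine internal direct sum at level $n$ --- only a completed one, whose construction (continuity of the spectral projectors, compatibility with the flat transition maps) is precisely the content your sketch does not supply. Relatedly, your route from $\Supp\mathcal{M}\subseteq Y$ to $t$-power-torsion via ``the support of $\mathcal{M}_n$ over $\mathcal{O}(X)$ plus the Nullstellensatz'' does not get off the ground: $\mathcal{M}_n$ is not a coherent $\mathcal{O}(X)$-module, so it has no well-behaved $\mathcal{O}(X)$-support. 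The genuinely non-classical input here is that $X\setminus Y$ is admissibly covered by the affinoid subdomains where $|t|\geq|\pi|^n$, on each of which $t$ is invertible and $\mathcal{M}$ vanishes, and one must extract from the vanishing of all of these localisations the statement that every element of $\mathcal{M}(X)$ is killed by a power of $t$. That is a substantive theorem about coadmissible modules, not a soft consequence of the definitions, and it is the main missing idea in your proposal.
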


\section{$p$-adic representations of $p$-adic Lie groups}

\subsection{Locally analytic representations}
Let $L$ be a finite extension of $\Qp$, assume that our ground field $K$ contains $L$ and let $G$ be a locally $L$-analytic group. In a series of papers including \cite{ST1}, \cite{ST0}, \cite{ST}, \cite{ST3}, Schneider and Teitelbaum developed the theory of \emph{admissible locally analytic $G$-representations} in locally convex $K$-vector spaces. This theory has found applications to several areas, including $p$-adic automorphic forms \cite{Loeffler}, $p$-adic interpolation \cite{EmertonInter}, non-commutative Iwasawa theory \cite{ST2} and the $p$-adic local Langlands programme \cite{Berger}, \cite{Colmez}, \cite{BreuilICM}, \cite{BreuilLocAnSocle}.

By definition, the \emph{locally analytic distribution algebra of $G$ over $K$} is the strong dual $D(G,K)$ of the vector space of locally analytic $K$-valued functions on $G$. It may be viewed as a certain $K$-Fr\'echet space completion of the group ring $K[G]$.

When the group $G$ is compact, Schneider and Teitelbaum showed that $D(G,K)$ is a Fr\'echet-Stein algebra, so the notion of coadmissible $D(G,K)$-module makes sense. A locally analytic representation $V$ of an arbitrary locally $L$-analytic group $G$ is \emph{admissible} if its strong dual is coadmissible as a module over the distribution algebra $D(H,K)$ of every compact open subgroup $H$ of $G$.

One of the most basic problems in this theory is to gain a better understanding of the \emph{irreducible} admissible locally analytic representations of $G$, or equivalently, the simple coadmissible modules over the distribution algebra $D(G,K)$.  

\vspace{0.2cm}
\subsection{Arens-Michael envelopes}
There is a natural embedding of the Lie algebra $\mathfrak{g}$ of $G$ into $D(G,K)$, which extends to an embedding of $K$-algebras $U(\mathfrak{g}_K) \hookrightarrow D(G,K)$, where $\mathfrak{g}_K := K \otimes_L \mathfrak{g}$. It follows from the work of Kohlhaase \cite{Kohlhaase} that the closure of the image consists of the $K$-valued locally analytic distributions on $G$ which are supported at the identity in a suitable sense, and is isomorphic to the Hausdorff completion $\wideparen{U(\mathfrak{g}_K)}$ of $U(\mathfrak{g}_K)$ with respect to all submultiplicative seminorms on $U(\mathfrak{g}_K)$. Following Schmidt \cite{SchmidtArens}, we call this completion the \emph{Arens-Michael envelope} of $U(\mathfrak{g}_K)$.

If $\{x_1,\ldots,x_d\}$ is a $K$-basis for $\mathfrak{g}_K$, then $\wideparen{U(\mathfrak{g}_K)}$ can be identified with the vector space of power series in the $x_i$ converging everywhere on $K^d$:
\[\wideparen{U(\mathfrak{g}_K)} = \left\{ \sum_{\alpha\in\mathbb{N}^d} \lambda_\alpha \mathbf{x}^\alpha \in K[[x_1,\ldots,x_d]] : \sup_{\alpha\in\mathbb{N}^d} |\lambda_\alpha|r^{-|\alpha|} < \infty \qmb{for all} r> 0\right\}.\]
This allows us to view $\wideparen{U(\mathfrak{g}_K)}$ as a ``rigid analytic quantisation" of $\mathfrak{g}^\ast_K$.

\vspace{0.2cm}
\subsection{Infinitesimal central characters}\label{InfChars}
Assume from now on that $G$ is an open subgroup of the group of $L$-rational points of a split semisimple $L$-algebraic group $\mathbf{G}$. Let $\mathfrak{g}$ be the Lie algebra of $G$. The classical ``Harish-Chandra" centre $Z(\mathfrak{g}_K)$ of $U(\mathfrak{g}_K)$ remains central in $D(G,K)$, and Kohlhaase showed that the Arens-Michael envelope of $Z(\mathfrak{g}_K)$ is in fact the centre of $D(G,K)$ whenever the centre of $G$ is trivial.
\vspace{0.2cm}
\begin{thm} Let $M$ be a simple coadmissible $\wideparen{U(\mathfrak{g}_K)}$-module. Then there exists a $K$-algebra homomorphism $\theta_M : Z(\mathfrak{g}_K) \to \overline{K}$ such that $z \cdot m = \theta_M(z)m$ for all $z \in Z(\mathfrak{g}_K)$ and $m\in M$. Thus $M$ has an \emph{infinitesimal central character}.
\end{thm}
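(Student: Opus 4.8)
The plan is to establish the $p$-adic analogue of Dixmier's theorem, and the crucial reduction is the following: it suffices to show that $E := \End_{\wideparen{U(\mathfrak{g}_K)}}(M)$ is algebraic over $K$. Indeed, $Z(\mathfrak{g}_K)$ lies inside $U(\mathfrak{g}_K) \subseteq \wideparen{U(\mathfrak{g}_K)}$ and remains central there, being central in the dense subalgebra $U(\mathfrak{g}_K)$ while multiplication on $\wideparen{U(\mathfrak{g}_K)}$ is continuous; hence $Z(\mathfrak{g}_K)$ acts on the simple module $M$ through a $K$-algebra homomorphism $Z(\mathfrak{g}_K) \to E$, and $E$ is a division ring by Schur's lemma. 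The image of $Z(\mathfrak{g}_K)$ is then a commutative integral domain, and if $E$ is algebraic over $K$ this image is a field $F$ algebraic over $K$; composing $Z(\mathfrak{g}_K) \twoheadrightarrow F$ with any embedding $F \hookrightarrow \overline{K}$ yields the infinitesimal central character $\theta_M$.

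So everything reduces to a \emph{Quillen's Lemma} for the Arens-Michael envelope, asserting that $E$ is algebraic over $K$. I would prove this by combining an affinoid version with a passage to the limit. Fix a $\Zp$-Lie lattice $\mathfrak{g}^\circ$ in $\mathfrak{g}$ and present $\wideparen{U(\mathfrak{g}_K)} = \invlim A_n$ as a Fr\'echet-Stein algebra with $A_n := \hUK{\pi^n\mathfrak{g}^\circ}$; for $n \geq 1$ the lattice $\pi^n\mathfrak{g}^\circ$ is abelian modulo $\pi$, so the reduction $\hU{\pi^n\mathfrak{g}^\circ}/\pi$ of its unit ball is an affine commutative algebra over the residue field $K^\circ/\pi$, whence $A_n$ is Noetherian and ``almost commutative over $K^\circ$''. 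The affinoid Quillen's Lemma I need asserts that for any such $A_n$ and any simple $A_n$-module $S$, the division ring $\End_{A_n}(S)$ is finite over $K$; one proves it by noting that $S$ is cyclic, choosing a finitely generated $\hU{\pi^n\mathfrak{g}^\circ}$-submodule (formal model) $S^\circ$ with $S^\circ[\pi^{-1}] = S$, reducing $S^\circ$ modulo $\pi$ to a finitely generated module over the affine commutative $(K^\circ/\pi)$-algebra $\hU{\pi^n\mathfrak{g}^\circ}/\pi$, invoking the Jacobson property and the Nullstellensatz over $K^\circ/\pi$, and transporting the resulting finiteness back up to $\End_{A_n}(S)$ through the Noetherian almost-commutative filtration on $A_n$.

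To descend to $\wideparen{U(\mathfrak{g}_K)}$, write $M = \invlim M_n$ with $M_n := A_n \wideparen{\otimes}_{\wideparen{U(\mathfrak{g}_K)}} M$ finitely generated over $A_n$, so that any $\phi \in E$ induces a compatible family $\phi_n \in \End_{A_n}(M_n)$ with $\phi = \invlim \phi_n$; feeding the affinoid Quillen's Lemma into the tower $(A_n)$ should bound each $\phi_n$ by a polynomial over $K$, and once the degrees are uniformly bounded by some $N$, the descending chain of the finite-dimensional subspaces $V_n \subseteq K^{N+1}$ consisting of those $(c_0,\ldots,c_N)$ with $\sum_i c_i\phi_n^i = 0$ must stabilise to a nonzero subspace, whose nonzero vectors supply a single polynomial over $K$ annihilating $\phi = \invlim \phi_n$ on all of $M$. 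The main obstacle is twofold: first, the affinoid Quillen's Lemma is genuinely harder than its classical counterpart because the associated graded ring of $A_n$ lives over the residue field rather than over $K$, so one cannot quote the characteristic-zero arguments and must instead track integral models through reduction modulo $\pi$; and second, relating $\End_{\wideparen{U(\mathfrak{g}_K)}}(M)$ to the affinoid endomorphism rings and extracting a \emph{uniform} degree bound is delicate — one likely has to exploit, beyond the simplicity of $M$, the fact that $\phi$ lies in the division ring $E$ and so acts invertibly in each $\End_{A_n}(M_n)$, or to work throughout with the primitive quotient $\wideparen{U(\mathfrak{g}_K)}/\operatorname{Ann}_{\wideparen{U(\mathfrak{g}_K)}}(M)$ and its centre.
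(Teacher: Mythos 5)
Your overall strategy is the same as the paper's: the result is deduced from an affinoid analogue of Quillen's Lemma for the Banach algebras $A_n = \hUK{\pi^n\mathfrak{g}^\circ}$ in a Fr\'echet--Stein presentation of $\wideparen{U(\mathfrak{g}_K)}$ --- this affinoid lemma is precisely \cite[Theorem D]{AW13}, which is all the paper cites --- followed by a descent to the limit. Your initial reduction (centrality of $Z(\mathfrak{g}_K)$ by density and continuity, Schur's lemma, a commutative domain algebraic over $K$ is a field) is correct. Be aware, though, that the affinoid lemma is the deep input and its proof is considerably more involved than your sketch suggests: one cannot get away with a single reduction modulo $\pi$ and the Nullstellensatz, but must run a deformed version of Quillen's generic-flatness argument adapted to the doubly filtered structure of $\hU{\pi^n\mathfrak{g}^\circ}$.

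The genuine gap is in your descent step. The affinoid Quillen's Lemma concerns \emph{simple} $A_n$-modules, whereas the $M_n$ are merely finitely generated and in general not simple, so the assertion that each $\phi_n$ ``is bounded by a polynomial over $K$'' does not follow; moreover the uniform degree bound $N$ is never justified, and for a general $\phi\in E$ it is not even clear how to relate $\phi_n$ to a simple quotient of $M_n$, since $\phi_n$ need not preserve a maximal submodule. The standard repair --- and the reason the theorem only requires the action of central elements --- is the Schur dichotomy: for $z\in Z(\mathfrak{g}_K)$ and $p\in K[t]$, the operator $p(z)$ acts on the simple module $M$ either as zero or bijectively. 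Choose a single $n$ with $M_n\neq 0$ and a single simple quotient $S$ of $M_n$; since $z$ is central in $A_n$ it induces an element of $\End_{A_n}(S)$, so the affinoid lemma yields one nonzero $p$ with $p(z)S=0$, hence $p(z)M_n\neq M_n$. If $p(z)$ were bijective on $M$, the fact that the image of $M$ generates $M_n$ over $A_n$ and that $p(z)$ is central would force $p(z)M_n=M_n$, a contradiction; therefore $p(z)M=0$. This dispenses entirely with the uniform bounds and the stabilising chain of subspaces $V_n$.
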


This result follows from our analogue of Quillen's Lemma \cite[Theorem D]{AW13} for affinoid enveloping algebras. Dospinescu and Schraen have extended this Theorem to simple coadmissible $D(G,K)$-modules in \cite{DoSch}.

\subsection{Beilinson-Bernstein Localisation}\label{BBLoc}
It follows from Theorem \ref{InfChars} that in the quest for simple coadmissible $\wideparen{U(\mathfrak{g}_K)}$-modules, it will be sufficient to study the central quotients
\[ \wideparen{\mathcal{U}^\theta} := \wideparen{U(\mathfrak{g}_K)} / \langle \ker \theta\rangle\]
for every central character $\theta :  Z(\mathfrak{g}_K) \to \overline{K}$ in turn. It is well-known that a good way to understand the uncompleted algebras $U(\mathfrak{g}_K) / \langle \ker \theta\rangle$ is through \emph{geometric representation theory} \cite{BB}, which interprets them as rings of globally defined twisted differential operators on the flag variety $\mathbf{G} / \mathbf{B}$ associated with $\mathfrak{g}_K$. 

\begin{thm} Let $(\mathbf{G} / \mathbf{B})^{\an}$ be the rigid analytic flag variety. Let $\mathfrak{t}_K$ be a Cartan subalgebra of $\mathfrak{g}$ and let $\lambda \in \mathfrak{t}_K^\ast$ be a dominant regular weight. Then there is an equivalence of abelian categories 
\[
\left\{ 
				\begin{array}{c} 
					coadmissible \\ 
					\wideparen{\mathcal{U}^{\lambda\phi}}\hspace{-0.1cm}-\hspace{-0.1cm}modules
				\end{array}
\right\} \cong \left\{
				\begin{array}{c}
				 coadmissible \hspace{0.1cm} \\ 
				  \wideparen{\mathcal{D}^\lambda}\hspace{-0.1cm}-\hspace{-0.1cm}modules \hspace{0.1cm} on \hspace{0.1cm} (\mathbf{G}/\mathbf{B})^{\an} 
				\end{array}
\right\}
\]
where $\phi : Z(\mathfrak{g}_K) \to S(\mathfrak{t}_K)$ is the Harish-Chandra homomorphism. \end{thm}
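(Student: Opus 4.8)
The plan is to mimic the classical Beilinson--Bernstein proof, replacing every step by its analytic analogue built from the sheaf $\hD$ and its coadmissible modules. First I would construct the sheaf $\wideparen{\mathcal{D}^\lambda}$ of $\lambda$-twisted infinite-order differential operators on $(\mathbf{G}/\mathbf{B})^{\an}$: one takes the sheaf of twisted crystalline differential operators $\mathcal{D}^\lambda$ on the algebraic flag variety $\mathbf{G}/\mathbf{B}$, analytifies, and then applies the Lie-lattice completion procedure of \S\ref{Dhat} locally on a $G$-stable affinoid covering, gluing via the non-commutative Tate acyclicity theorem. One checks that $(\mathbf{G}/\mathbf{B})^{\an}$ is covered by (translates of) big-cell-type affinoids on which $\mathcal{T}$ admits a free Lie lattice, so that the results of \S\ref{DimTh} apply and $\wideparen{\mathcal{D}^\lambda}(U)$ is Fr\'echet--Stein on each such $U$; hence the category $\mathcal{C}$ of coadmissible $\wideparen{\mathcal{D}^\lambda}$-modules on $(\mathbf{G}/\mathbf{B})^{\an}$ is well-defined.

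Next I would set up the localisation functor. Taking global sections gives a map $\wideparen{\mathcal{U}^{\lambda\phi}} \to \Gamma((\mathbf{G}/\mathbf{B})^{\an}, \wideparen{\mathcal{D}^\lambda})$, and one wants to show it is an isomorphism. Writing both sides as inverse limits over Lie lattices, this reduces to a Banach-algebra statement: for each sufficiently small Lie lattice $\L$, the affinoid enveloping algebra $\hUK{\L_\lambda}$ of the twisted Atiyah algebroid has global sections computed by the affinoid analogue of $\Gamma(\mathbf{G}/\mathbf{B}, \mathcal{D}^\lambda) = U(\mathfrak{g}_K)/\langle\ker\lambda\phi\rangle$. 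I would prove this by a \v Cech computation using the covering by big cells, together with the vanishing of higher cohomology of $\wideparen{\mathcal{D}^\lambda}$ (the analogue of the first displayed Theorem), and passing to the inverse limit, using that coadmissible modules are determined by their finite levels. Dominance of $\lambda$ enters here exactly as in the algebraic case, controlling the relevant Lie-algebra cohomology / global section computation after reduction mod $\pi$.

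Then the localisation functor $M \mapsto \wideparen{\mathcal{D}^\lambda} \wideparen{\otimes}_{\wideparen{\mathcal{U}^{\lambda\phi}}} M$ and the global sections functor $\mathcal{M} \mapsto \Gamma((\mathbf{G}/\mathbf{B})^{\an},\mathcal{M})$ are the candidate quasi-inverse equivalences. To show $\Gamma$ is exact and $\mathrm{Loc}$ lands in coadmissible modules, I would again work level-by-level: on each member of the affinoid covering and each Lie lattice, exactness of global sections is the $\hD$-module analogue of Beilinson--Bernstein's vanishing theorem, which via the anti-equivalence of \S\ref{DimTh} and Auslander-regularity of $\hUK{\pi^n\L}$ follows from ampleness of the relevant line bundle on the special fibre, using regularity of $\lambda$. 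Finally, the unit and counit of the adjunction are isomorphisms: the counit $\mathrm{Loc}(\Gamma\mathcal{M}) \to \mathcal{M}$ is checked locally on the affinoid covering, reducing to the affine algebraic statement after completion, and the unit $M \to \Gamma(\mathrm{Loc}\, M)$ follows from the global sections computation above.

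The main obstacle I expect is the level-by-level cohomology vanishing: one needs a uniform (in $n$) version of Beilinson--Bernstein vanishing for the deformed sheaves $\hUK{\pi^n \L_\lambda}$ on the formal flag scheme, i.e. an integral model of the flag variety over which the twisted line bundle $\mathcal{L}(\lambda)$ stays ample and whose cohomology vanishing survives $\pi$-adic completion and inversion of $\pi$; controlling the transition maps so that the inverse limit is genuinely Fr\'echet--Stein and the functors are compatible across levels is the delicate technical heart. Regularity of $\lambda$ is needed to kill the higher $\Ext$-obstructions, and dominance to make $\Gamma$ faithful; without a careful choice of Lie lattice adapted to $\lambda$ the estimates controlling convergence of the completions break down.
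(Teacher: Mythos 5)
The paper is a survey and gives no proof of this theorem (proofs are deferred to the joint work with Wadsley), but your outline follows exactly the route the paper indicates via its cited precursor \cite[Theorem C]{AW13}: construct $\wideparen{\mathcal{D}^\lambda}$ from deformations of the twisted Atiyah algebroid, prove the affinoid-level Beilinson--Bernstein equivalence for each $\hUK{\pi^n\L_\lambda}$ using dominance and regularity of $\lambda$, and pass to the inverse limit over Lie lattices to obtain the statement for coadmissible modules. Your identification of the delicate point --- uniform-in-$n$ cohomology vanishing on the formal model and compatibility of the functors across levels --- is accurate and is indeed where the technical work lies.
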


Here $\wideparen{\mathcal{D}^\lambda}$ denotes a $\lambda$-twisted version of the ring $\wideparen{\mathcal{D}}$ from $\S \ref{Dhat}$. This rigid analytic analogue of the Beilinson-Bernstein Localisation Theorem has several precursors, including \cite[Theorem 3.2]{BMR1},  \cite[Th\'eor\`eme 2.1]{Noot1} and \cite[Theorem C]{AW13}.  

\subsection{Canonical dimension estimates}\label{DimEst}

Schneider and Teitelbaum's dimension theory from \cite[\S 8]{ST} applies not only to our algebras $\cD{X}$ as explained in $\S \ref{DimTh}$ above, but also to the Arens-Michael envelopes $\wideparen{U(\mathfrak{g}_K)}$ and the distribution algebras $D(G,K)$ whenever $G$ is compact locally $\Qp$-analytic group. The canonical dimension of a coadmissible $D(G,K)$-module $M$ is zero precisely when $M$ is finite dimensional as a $K$-vector space. 

Using the folklore observation \cite{BezrukavnikovICM} that the main mechanism behind the Beilinson-Bernstein Localisation Theorem is a quantisation of the Springer resolution,  we obtain the following analogue of Bernstein's Inequality for $\wideparen{U(\mathfrak{g}_K)}$.
\begin{thm} Suppose that $p$ is a very good prime for $\mathbf{G}$. Let $r$ be the half the smallest possible dimension of a non-zero $\mathbf{G}(K)$-orbit in $\mathfrak{g}_K^\ast$ and let $M$ be a coadmissible $\wideparen{U(\mathfrak{g}_K)}$-module. Then either $d(M) = 0$ or $d(M) \geq r$.
\end{thm}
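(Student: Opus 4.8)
\medskip

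The plan is to carry over the classical proof of Bernstein's inequality for $U(\mathfrak{g})$ — involutivity of a characteristic variety, followed by a short argument in symplectic linear algebra on coadjoint orbits — to the completed setting. Fix a $\Zp$-Lie lattice $\mathfrak{g}^\circ$ in $\mathfrak{g}$, so that $\wideparen{U(\mathfrak{g}_K)} = \invlim \hUK{\pi^n\mathfrak{g}^\circ}$ is a presentation by Auslander-regular affinoid enveloping algebras, each of global dimension $\dim\mathfrak{g}$, and write $M = \invlim M_n$ with $M_n$ finitely generated over $\hUK{\pi^n\mathfrak{g}^\circ}$ and $n$ ranging over $\mathbb{N}$. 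Schneider and Teitelbaum's dimension theory from \cite[\S 8]{ST} gives $d(M) = \max_n d(M_n)$, with the maximum attained, and $d(M) = 0$ precisely when every $M_n$, hence $M$, is finite dimensional over $K$ (compare \S\ref{DimEst}). Thus it suffices to show that a non-zero finitely generated $\hUK{\pi^n\mathfrak{g}^\circ}$-module $N$ has $d(N) = 0$ or $d(N) \geq r$.

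A good filtration on $N$ yields a conical, Zariski-closed characteristic variety $\Ch(N)$ inside a polydisc model of $\mathfrak{g}_K^\ast$, with $\dim\Ch(N) = d(N)$ and with $\Ch(N) = \{0\}$ if and only if $N$ is finite dimensional. The crucial point is that $\Ch(N)$ is \emph{coisotropic} for the natural Lie-Poisson bracket on $\mathfrak{g}_K^\ast$ (defined up to a scaling by a power of $\pi$, which does not affect which subvarieties are coisotropic). This is not visible through the $\pi$-adic filtration, whose associated graded is commutative with zero bracket; instead one uses the quantisation of the Springer resolution underlying Theorem \ref{BBLoc}. Applying the localisation equivalence — in the Grothendieck-Springer form that handles all infinitesimal central characters simultaneously — realises $N$ as the module of global sections of a coherent module $\widetilde N$ over a twisted sheaf of completed differential operators on a formal model of the flag variety; there the order filtration survives, its associated graded is the structure sheaf of a model of the symplectic variety $T^\ast(\mathbf{G}/\mathbf{B})$, and Gabber's involutivity theorem applies to the characteristic variety $\Ch(\widetilde N)$. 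Pushing this coisotropic variety forward along the quantised Springer moment map — which carries it onto $\Ch(N)$ and preserves coisotropy — gives the desired involutivity. This is precisely where the folklore input that the mechanism behind Beilinson-Bernstein localisation is a quantisation of the Springer resolution is used.

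Granting involutivity, suppose $d(M) = d(N) > 0$, so $\Ch(N)$ is a conical coisotropic subvariety of $\mathfrak{g}_K^\ast$ properly containing $\{0\}$. Choose an irreducible component $C$ of $\Ch(N)$ with $\dim C \geq 1$ and a smooth $\overline{K}$-point $x$ of $C$ with $x \neq 0$; this is possible since the smooth locus of $C$ is dense of dimension $\geq 1$. The symplectic leaf through $x$ is the coadjoint orbit $\mathbf{G}\cdot x$, which is non-zero, so $\dim(\mathbf{G}\cdot x) \geq 2r$ (the minimal dimension of a non-zero orbit does not change under base change to $\overline{K}$, being realised by the split group on its minimal nilpotent orbit). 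Coisotropy of $C$ forces $T_x C \cap T_x(\mathbf{G}\cdot x)$ to be a coisotropic subspace of the symplectic vector space $T_x(\mathbf{G}\cdot x)$, whence
\[
 d(M) \;=\; \dim C \;\geq\; \dim\bigl(T_x C \cap T_x(\mathbf{G}\cdot x)\bigr) \;\geq\; \frac{1}{2}\dim(\mathbf{G}\cdot x) \;\geq\; r .
\]
The hypothesis that $p$ is very good is used when invoking Theorem \ref{BBLoc} and in ensuring that the nilpotent cone, the Springer resolution, and the relevant integral forms behave as in characteristic zero.

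The step I expect to be the main obstacle is proving that $\Ch(N)$ is coisotropic. Because there is no exhaustive order filtration on $\wideparen{U(\mathfrak{g}_K)}$ — as noted in \S\ref{Kash} for the sheaf $\hD$ — Gabber's theorem cannot be applied directly, and one must genuinely transport it through the completed twisted differential operators on the flag variety; constructing the comparison between $\Ch(N)$ and the flag-variety characteristic variety along the quantised Springer map, and checking that coisotropy is preserved, is the technical core. A secondary difficulty is that a finitely generated $\hUK{\pi^n\mathfrak{g}^\circ}$-module need not have finite length, so $N$ cannot simply be decomposed into infinitesimal-central-character components; this is why the Grothendieck-Springer (universal twist) form of localisation, rather than the fixed-$\lambda$ statement of Theorem \ref{BBLoc}, is the natural tool here.
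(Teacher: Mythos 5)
Your opening reduction --- writing $\wideparen{U(\mathfrak{g}_K)} = \invlim \hUK{\pi^n\mathfrak{g}^\circ}$ and using Schneider--Teitelbaum's dimension theory to reduce to a non-zero finitely generated module $N$ over a single affinoid enveloping algebra --- is exactly the paper's first step: the paper derives the theorem from the corresponding statement for semisimple affinoid enveloping algebras, \cite[Theorem 9.10]{AW13}. But your proof of the affinoid case takes a different route from the one the paper relies on, and that route has a genuine gap. The argument in \cite{AW13} never establishes, nor needs, involutivity of $\Ch(N)$ inside $\mathfrak{g}^\ast$. Instead it localises $N$ to a coherent module $\widetilde{N}$ over completed twisted differential operators on the flag variety, applies Bernstein's inequality \emph{upstairs} --- this is precisely the content of Theorem 1.7 of the paper, the Auslander-regularity and global dimension $d$ of the deformed Tate--Weyl algebras, which forces $\dim \Ch(\widetilde{N}) \geq \dim \mathbf{G}/\mathbf{B}$ --- and then descends along the moment map $T^\ast(\mathbf{G}/\mathbf{B}) \to \mathfrak{g}^\ast$ using the Springer fibre dimension formula $\dim \mathcal{B}_x = \dim \mathbf{G}/\mathbf{B} - \tfrac{1}{2}\dim(\mathbf{G}\cdot x)$, giving $d(N) \geq \dim\Ch(\widetilde{N}) - \max_x \dim\mathcal{B}_x \geq \tfrac{1}{2}\dim(\mathbf{G}\cdot x) \geq r$ directly. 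No appeal to Gabber's theorem or to coisotropy of subvarieties of coadjoint orbits is required. By contrast, the coisotropy of $\Ch(N)$ is exactly the step you leave unproved, and you correctly identify it as the technical core; since everything after it depends on it, your proof is conditional on a statement that is at least as hard as the theorem itself and is not what the cited references supply.

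There is a second, independent problem: you place $\Ch(N)$ ``inside a polydisc model of $\mathfrak{g}_K^\ast$'' and then argue with smooth $\overline{K}$-points, tangent spaces and $\mathbf{G}(\overline{K})$-orbits. But the associated graded of $\hUK{\pi^n\mathfrak{g}^\circ}$ with respect to its natural (Zariskian, $\pi$-adically built) filtration is an algebra over the residue field $k$ of $K^\circ$ --- for $n \geq 1$ it is essentially $S(\mathfrak{g}_k)[t,t^{-1}]$, with the Lie--Poisson bracket of $\mathfrak{g}_k$ appearing only after rescaling --- so the characteristic variety of $N$ is a subvariety of $\mathfrak{g}_k^\ast$ in characteristic $p$. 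The orbit-dimension bound and any symplectic linear algebra must therefore be carried out for $\mathbf{G}$ over $\overline{k}$; this is where the hypothesis that $p$ is very good actually does its work (guaranteeing that the minimal non-zero orbit dimension and the Springer fibre dimensions agree with their characteristic-zero values), not merely ``when invoking Theorem 2.4''. It also makes your plan to quote Gabber's involutivity theorem more doubtful, since that theorem is proved for $\mathbb{Q}$-algebras and does not transfer for free to associated graded rings in characteristic $p$. As written, your final chain of inequalities compares the dimension of a characteristic-$p$ variety with tangent-space data at characteristic-zero points, which does not parse. Your closing remark that a finitely generated module need not admit a decomposition by infinitesimal central characters is a legitimate concern, and \cite{AW13} does address it, but the fix there is again homological (reducing the grade computation to suitable central quotients) rather than a Grothendieck--Springer form of the localisation theorem.
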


We refer the reader to \cite[\S 6.8, \S 9.9]{AW13} for the meaning of the words ``very good prime", and the precise values that the invariant $r$ takes. Roughly speaking, $r$ is the square root of the dimension of $G$: for example if $\mathbf{G} = SL_n$ then $r = n - 1$. Theorem \ref{DimEst} is an analogue of Smith's Theorem for classical enveloping algebras of complex semisimple Lie algebras \cite{Smith}, and follows easily from the corresponding statement for semisimple affinoid enveloping algebras \cite[Theorem 9.10]{AW13}.  A similar estimate holds for semisimple Iwasawa algebras \cite[Theorem A]{AW13}, and semisimple locally analytic distribution algebras \cite[Theorem 9.9]{SchmidtDistLoc}.

\subsection{Equivariant $\hD$-modules}
At the time of writing, the main applications of our methods to the theory of locally analytic representations have been the dimension estimates explained above. However, we believe that there is significant scope for other applications. Using $\hD$-modules it should be possible to construct irreducible coadmissible $D(G,K)$-modules geometrically, and to better understand the admissible representations arising in the $p$-adic local Langlands programme for $\GL_2(\Qp)$ and for other $p$-adic Lie groups.

There have been several attempts to prove a version of the Beilinson-Bernstein Localisation Theorem for locally analytic distribution algebras, including \cite{SchmidtDistLoc} and \cite{PSS}. We expect that it will be possible in the future to show that the abelian category of admissible locally analytic representations of $G$ with dominant regular infinitesimal central character $\lambda \phi$ is anti-equivalent to the category of \emph{coadmissible $G$-equivariant $\wideparen{\mathcal{D}^\lambda}$-modules} on the rigid analytic flag variety.

\end{document}